\title{\bf Characterizing isolated singularities of conformal hyperbolic metrics}
\author{Yu Feng$^1$, Yiqian Shi$^2$ and Bin Xu$^3$}
\def\nd{\noindent}
\newtheorem{thm}{Theorem}[section]
\newtheorem{lem}{Lemma}[section]
\newtheorem{defi}{Definition}[section]
\newtheorem{rem}{Remark}[section]
\newtheorem{prob}{Problem}[section]
\begin{document}

\maketitle

\nd{\small  $^{1,2,3}$Wu Wen-Tsun Key Laboratory of Mathematics, USTC, Chinese Academy of Sciences\\
$^{1,2,3}$School of Mathematical Sciences, University of Science and Technology of China.\\
No. 96 Jinzhai Road, Hefei, Anhui Province\  230026\  P. R. China.}

\nd {\small $^{1}$\Envelope yuf@mail.ustc.edu.cn \quad  $^{2}$ yqshi@ustc.edu.cn\quad  $^3$ bxu@ustc.edu.cn}
\par\vskip0.5cm

\nd {\small {\bf Abstract:}
We find the explicit local models of isolated singularities of conformal hyperbolic metrics by Complex Analysis, which is interesting in its own and could potentially be extended to high-dimensional case.\\

\nd{\bf Keywords.} conformal hyperbolic metrics, conical singularity, cusp singularity, developing map\\

\nd {\bf 2010 Mathematics Subject Classification.} Primary 51M10; Secondary 34M35

\section{Introduction}
\paragraph{}  Let $\Sigma$ be a Riemann surface and $\textup{D}=\sum\limits_{i=1}^\infty(\theta_{i}-1)p_{i}$ a ${\Bbb R}$-divisor on $\Sigma$ such that $0\leq \theta_i\not=1$, where $\{p_{i}\}_{i=1}^\infty\subset \Sigma$ is a closed discrete subset. We denote by $M(\Sigma)$ the set of $C^{\infty}$ conformal metrics of constant curvature $-1$ on a Riemann surface $\Sigma$. We call $\mathrm{d} \sigma^2$ a (singular) conformal hyperbolic metric {\it representing $D$}  if and only if
\begin{itemize}

              \item   $\mathrm{d} \sigma^2\in M(\Sigma \backslash {\rm supp}\, D)$, where ${\rm supp}\, D= \{p_1,\cdots, p_n,\cdots\}$.

                 \item \ If $\theta_i>0$, then $\mathrm{d} \sigma^2$\ {\it has a conical singularity at $p_{i}$ with cone angle\ $2\pi\theta_{i}>0$}. That is, in a neighborhood $U$ of  $p_{i}$, $\mathrm{d} \sigma^2=e^{2u}{\vert \mathrm{d} z \vert}^{2}$, where $z$ is a complex coordinate of $U$  with\ $z(p_{i})=0$ and\ $u-(\theta_i-1)\ln \vert z \vert$\ extends to a continuous function in $U$.

         \item \ If $\theta_i=0$, then $\mathrm{d} \sigma^2$\ {\it has a cusp singularity at\ $p_{i}$}. That is, in a neighborhood $V$ of\ $p_{i}$,\ $\mathrm{d} \sigma^2=e^{2u}{\vert \mathrm{d} z \vert}^{2}$, where\ $z$\ is a complex coordinate of $V$ with\ $z(p_{i})=0$ and\ $u+\ln\, \vert z \vert+\ln\,(-\ln\, \vert z \vert)$\ extends to a continuous function in $V$.

 \end{itemize}

There have been some studies on the local behavior of a conformal hyperbolic metric near an isolated singularity. Nitsche \cite{N57}, Heins \cite{HE62}, Chou and Wan \cite{CW94,CW95} proved that an isolated singularity of a conformal hyperbolic metric must be either a conical singularity or a cusp one. They all obtained the result by studying the behaviour of the solutions of the Liouville equation
$$\Delta u =e^{2u}$$
near isolated singularities. Yamada \cite{Ya88} considered the same problem from the perspective of complex analysis. However, all of them  only gave an asymptotic model for a hyperbolic metric near an isolated singularity. We want to seek for an explicit local model.

  For this purpose, we did some explorations. Firstly, in \cite{LLX17}, by using PDEs Bo Li, Long Li, the first and the third authors proved the following lemma:

\begin{lem}
\label{lem:Schwarzian}
Let\ $\mathrm{d} \sigma^2$\ be a conformal hyperbolic metric on a Riemann surface\ $\Sigma$, and suppose\ $\mathrm{d} \sigma^2$\ represents a divisor\ $\textup{D}=\sum\limits_{i=1}^\infty(\theta_{i}-1)p_{i},\  0\leq \theta_{i}\not=1$.
Suppose that\ $F:\Sigma\backslash {\rm supp}\, D\longrightarrow \mathbb{D}$\ is a developing map of\ $\mathrm{d} \sigma^2$ (see Defiition \ref{defi:developing map}). Then the Schwarzian derivative\ $\{F,z\}=\frac{F^{'''}(z)}{F^{'}(z)}-\frac{3}{2}\Big(\frac{F^{''}(z)}{F^{'}(z)}\Big)^{2}$ of\ $F$\ equals
\[
\{F,z\}=\frac{1-\theta_{i}^{2}}{2z^{2}}+\frac{d_{i}}{z}+\phi_{i}(z)
\]
in a neighborhood\ $U_{i}$\ of\ $p_{i}$\ with complex coordinate\ $z$ and\ $z(p_{i})=0$, where \ $d_{i}$\ are constants and \ $\phi_{i}$\ are holomorphic functions in\ $U_{i}$, depending on the complex coordinate\ $z$.
\end{lem}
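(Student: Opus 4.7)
The plan is to exploit the invariance of the Schwarzian derivative under post-composition with Möbius transformations. Since the developing map $F$ is multivalued only via the monodromy representation, which takes values in $\mathrm{Aut}(\mathbb{D})$, its Schwarzian $\{F,z\}$ descends to a single-valued function on the punctured neighborhood $U_i\setminus\{p_i\}$. Moreover, because $\mathrm{d}\sigma^2=\tfrac{4|F'|^2}{(1-|F|^2)^2}|\mathrm{d}z|^2$ is a positive smooth metric on $\Sigma\setminus\mathrm{supp}\,D$, the derivative $F'$ vanishes nowhere there, so $\{F,z\}$ is in fact holomorphic on $U_i\setminus\{p_i\}$. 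The problem thus reduces to controlling the principal part of its Laurent expansion at $p_i$.

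The next step is to bring $F$ into a local normal form. Replacing $F$ by $\phi\circ F$ for a suitable $\phi\in\mathrm{Aut}(\mathbb{D})$ does not change $\{F,z\}$ but conjugates the monodromy, so one may assume: in the conical case that the monodromy fixes the origin with rotation angle $2\pi\theta_i$, so that $g(z):=F(z)/z^{\theta_i}$ is single-valued on $U_i\setminus\{p_i\}$; in the cusp case (after passing to the upper half plane model) that the monodromy is the translation $w\mapsto w+1$, so that $G(z):=F(z)-\tfrac{1}{2\pi i}\log z$ is single-valued on $U_i\setminus\{p_i\}$. In both cases, the asymptotic behaviour of the conformal factor prescribed by the definition of the singularity translates into a controlled growth rate for $F$ and $F'$, and Riemann's removable singularity theorem then shows that $g$ (respectively $G$) extends holomorphically across $p_i$, with $g(0)\neq 0$.

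With this local normal form in hand, one reads off the Schwarzian by introducing a new coordinate that reduces $F$ to a standard model. In the conical case, $\tilde z:=z\,g(z)^{1/\theta_i}$ is a well-defined local biholomorphism near $0$ (since $g(0)\neq 0$) satisfying $F=\tilde z^{\theta_i}$; in the cusp case, $\tilde z:=z\,e^{2\pi i G(z)}$ is a local biholomorphism satisfying $F=\tfrac{1}{2\pi i}\log\tilde z$. The Schwarzian chain rule
\[
\{F,z\}\;=\;\{F,\tilde z\}\Bigl(\frac{\mathrm{d}\tilde z}{\mathrm{d}z}\Bigr)^{\!2}+\{\tilde z,z\},
\]
together with the elementary evaluations $\{w^{\theta_i},w\}=(1-\theta_i^2)/(2w^2)$ and $\{\log w,w\}=1/(2w^2)$, reduces the computation to expanding the holomorphic function $(\mathrm{d}\tilde z/\mathrm{d}z)^2/\tilde z^2$ about $0$. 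Since $\tilde z/z$ is holomorphic and nonvanishing at $0$, this ratio has Laurent expansion $1/z^2+(\text{const})/z+O(1)$, and collecting terms yields $\{F,z\}=\tfrac{1-\theta_i^2}{2z^2}+\tfrac{d_i}{z}+\phi_i(z)$ with $\phi_i$ holomorphic in $U_i$.

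The main obstacle is the removable-singularity step, where one must convert the continuity statement $u-(\theta_i-1)\log|z|\in C^0$ (or the cusp analogue) into a quantitative growth bound strong enough to extend $g$ or $G$ holomorphically across $p_i$. This is where the analytic theory of the Liouville equation, or equivalently a careful study of the boundary behaviour of the developing map, has to be invoked; the cusp case is somewhat more delicate because of the iterated logarithm factor $-\log|z|\cdot\log(-\log|z|)$ that appears in the singular part of the conformal factor.
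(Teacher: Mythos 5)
Your skeleton (monodromy-invariance of the Schwarzian, a normal form for the monodromy, holomorphic extension of the single-valued part, then the chain rule with $\{w^{\theta_i},w\}=(1-\theta_i^2)/(2w^2)$ and $\{\log w,w\}=1/(2w^2)$) has the right shape, and the final computation in your third step is correct. But there are two genuine gaps, both in the middle step. First, post-composing $F$ with $\phi\in\textup{PSU}(1,1)$ only \emph{conjugates} the monodromy; it cannot change its conjugacy class. So ``one may assume the monodromy is the rotation by angle $2\pi\theta_i$'' (conical case) or ``the translation'' (cusp case) is not a normalization --- it is a substantive claim, namely that a conical singularity of angle $2\pi\theta_i$ forces elliptic (or trivial) monodromy with multiplier exactly $e^{2\pi i\theta_i}$, and that a cusp forces parabolic monodromy. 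A priori the monodromy could be hyperbolic, or elliptic with an unrelated rotation angle; ruling this out is part of the work, and you do not even flag it. Second, the step you do flag --- converting the definition of the singularity into a bound strong enough to extend $g$, respectively $G$, across $p_i$ --- is the analytic heart of the lemma, and your proposal leaves it at ``has to be invoked'' from Liouville theory or boundary-behaviour analysis. A proposal that defers both the monodromy--singularity correspondence and the extension step has deferred essentially all of the content.

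For comparison: the paper itself does not reprove this lemma (it quotes the PDE proof from \cite{LLX17}); its own complex-analytic argument in Section 3 runs in the opposite direction and thereby closes both of your gaps at once. There one classifies by monodromy type rather than by singularity type: hyperbolic monodromy is excluded by Lemma~\ref{lem:not hyperbolic} (the developing map does not increase hyperbolic distance, $d_{\mathbb{H}}(z,z+2\pi)\to 0$ as $\textup{Im}\,z\to+\infty$, and then the infimum criterion of Lemma~\ref{lem:infimum} applies), and in each remaining case --- parabolic, elliptic, identity --- the holomorphic extension of the periodic part of the developing map is obtained in Lemma~\ref{lem:main} \emph{without any growth estimate on the conformal factor}: it follows from the mere fact that the developing map has range in $\mathbb{D}$ (or positive imaginary part in the $\mathbb{H}$-model), via the Fourier expansion of a periodic holomorphic function, the Great Picard theorem, and the Casorati--Weierstrass theorem. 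The local models of the metric then drop out, and the correspondence between monodromy type and singularity type --- which you tried to assume --- is a corollary of that classification. If you want to complete your proposal along your lines, the cleanest repair is to replace the planned appeal to Liouville-equation asymptotics by exactly this range-based removability argument, and to derive (rather than assume) the monodromy normal form from it.
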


\noindent Based on Lemma \ref{lem:Schwarzian}, we obtained the local expressions of developing maps near isolated singularities of hyperbolic metrics (see \rm{\cite[Lemma 2.4]{FSX}}). In this process, we solved a Fuchsian equation of second order. Using these expressions, we finally got the local model of a singular conformal hyperbolic metric. Therefore, this proof process has some twists and turns. After these explorations and reading \cite{Ya88}, we speculated that there should be a direct proof by using Complex Analysis only, which motivated this manuscript. In this note, we complete this modest project.

Below we present the main result of this note.

\begin{thm}\rm{\cite[Theorem 1.2]{FSX}}
\label{thm:main}
\emph{
Let\ $\mathrm{d} \sigma^2$\ be a conformal hyperbolic metric on the punctured disk\ $\mathbb{D}^{*}=\{{w\in\mathbb{C}|0<\vert w \vert<1}\}$. Then $0$ is either a conical singularity or a cusp singularity of $\mathrm{d} \sigma^2$.  If\ $\mathrm{d} \sigma^2$\ has a conical singularity at\ $w=0$\ with the angle\ $2\pi\alpha>0$, then there exists a complex coordinate\ $z$ on\ $\Delta_{\varepsilon}=\{{w\in\mathbb{C}|\vert w \vert<\varepsilon}\}$\ for some\ $\varepsilon>0$ with\ $z(0)=0$ such that
$$ \mathrm{d} \sigma^2|_{\Delta_{\varepsilon}}=\frac{4\alpha^2\vert z \vert^{2\alpha-2}}{\big(1-\vert z \vert ^{2\alpha}\big)^2}\vert \mathrm{d} z \vert^2.$$
Moreover,\ $z$ is unique up to replacement by\ $\lambda z$ where\ $\vert \lambda \vert=1$. If\ $\mathrm{d} \sigma^2$\ has a cusp singularity at\ $w=0$, then there exists a complex coordinate\ $z$ on\ $\Delta_{\varepsilon}=\{{w\in\mathbb{C}|\vert w \vert<\varepsilon}\}$\ for some\ $\varepsilon>0$ with\ $z(0)=0$ such that
$$ \mathrm{d} \sigma^2|_{\Delta_{\varepsilon}}=\vert z \vert ^{-2}\big(\ln|z|\big)^{-2}|dz|^{2}.$$
Moreover,\ $z$ is unique up to replacement by\ $\lambda z$ where\ $\vert \lambda \vert=1$.}\\
\end{thm}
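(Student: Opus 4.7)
The plan is to analyze the developing map of $\mathrm{d}\sigma^2$ via the universal covering $\pi:\mathbb{H}\to\mathbb{D}^*$, $\pi(\tau)=e^{2\pi i\tau}$. Lift the developing map to a holomorphic local isometry $\tilde F:\mathbb{H}\to\mathbb{D}$ with monodromy $\rho\in\mathrm{Aut}(\mathbb{D})$ satisfying $\tilde F(\tau+1)=\rho\circ\tilde F(\tau)$. The trichotomy of M\"obius transformations---elliptic, parabolic, or hyperbolic---drives the case analysis, and the template is the same in each case: introduce a $1$-periodic holomorphic function on $\mathbb{H}$ adapted to $\rho$, push it down to a single-valued holomorphic function on $\mathbb{D}^*$, and exploit $\tilde F(\mathbb{H})\subset\mathbb{D}$ to control its behavior at $w=0$.

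For elliptic $\rho$, after post-composing $\tilde F$ with a M\"obius isometry, normalize to $\rho(\zeta)=e^{2\pi i\alpha}\zeta$ fixing $0\in\mathbb{D}$, with $\alpha\in(0,1)$ (the case of trivial monodromy is handled analogously after first moving $F(0)$ to the origin). Then $H(\tau):=\tilde F(\tau)\,e^{-2\pi i\alpha\tau}$ is $1$-periodic and descends to $\tilde H$ holomorphic on $\mathbb{D}^*$; the bound $|\tilde F|<1$ yields $|\tilde H(w)|<|w|^{-\alpha}$, from which $|w\tilde H(w)|=o(1)$ and so $\tilde H$ extends holomorphically across $0$. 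Factoring $\tilde H(w)=w^m H_1(w)$ with $H_1(0)\ne 0$ and $m\ge 0$ gives $\tilde F(\tau)=w^\theta H_1(w)$ for $\theta:=\alpha+m>0$. The local coordinate change $z=w\cdot H_1(w)^{1/\theta}$ (a $\theta$-th root exists since $H_1(0)\ne 0$) then reduces the developing map to $\tilde F(\tau)=z^\theta$, and pulling back the Poincar\'e metric on $\mathbb{D}$ by $\zeta=z^\theta$ yields the stated conical model with cone angle $2\pi\theta$.

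For parabolic $\rho$, I switch to the upper half plane model of the target and conjugate so that $\rho(\zeta)=\zeta+1$. Then $G(\tau):=\tilde F(\tau)-\tau$ is $1$-periodic and descends to $\tilde G$ holomorphic on $\mathbb{D}^*$. The condition $\mathrm{Im}\tilde F>0$, combined with $\mathrm{Im}\tau=-\log|w|/(2\pi)$, translates to $|w\,e^{2\pi i\tilde G(w)}|<1$; hence $\phi(w):=w\,e^{2\pi i\tilde G(w)}$ is bounded on $\mathbb{D}^*$ and extends holomorphically to $\mathbb{D}$. Single-valuedness of $\tilde G$ around $0$ forces $\phi$ to have a simple zero at $w=0$ (a higher-order zero would contradict the mismatch of monodromies of $\log w$ and $\log\phi$), which in turn implies that $\tilde G=\log(\phi(w)/w)/(2\pi i)$ extends holomorphically across the puncture. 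With the new coordinate $z=\phi(w)$, the developing map becomes $\tilde F(\tau)=\log z/(2\pi i)$, and pulling back the hyperbolic metric on $\mathbb{H}$ produces the cusp model $|dz|^2/(|z|^2(\log|z|)^2)$.

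The hyperbolic case is ruled out by the same exponentiation idea. Conjugating to $\rho(\zeta)=\lambda\zeta$ on $\mathbb{H}$ with $\lambda>1$, set $L:=\log\tilde F$, single-valued on $\mathbb{H}$ with $\mathrm{Im} L\in(0,\pi)$, and $M(\tau):=L(\tau)-c\tau$ with $c=\log\lambda+2\pi im$, $m\in\mathbb{Z}$; then $M$ is $1$-periodic and descends to $\tilde M$ on $\mathbb{D}^*$, and the constraint on $\mathrm{Im} L$ forces $\mathrm{Im}\tilde M(w)\to-\infty$ uniformly as $w\to 0$. Consequently $N(w):=e^{-i\tilde M(w)}$ extends holomorphically across $0$ with $N(0)=0$, so $N$ has a zero of some order $k\ge 1$; but then $\log N$ has nontrivial monodromy $2\pi ik$ around $0$, contradicting single-valuedness of $\tilde M=i\log N$. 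Uniqueness of $z$ up to $z\mapsto\lambda z$ with $|\lambda|=1$ follows in both admissible cases from the residual rotational freedom in the normalization of $\rho$. The main obstacle is the parabolic case: a priori the only bound on $\tilde G$ is the weak inequality $\mathrm{Im}\tilde G(w)>\log|w|/(2\pi)$, and the exponentiation trick $\phi=w\,e^{2\pi i\tilde G}$, which promotes this into genuine boundedness on $\mathbb{D}$, is the crucial complex-analytic step that replaces the Schwarzian derivative and Fuchsian-equation machinery used in the authors' earlier work.
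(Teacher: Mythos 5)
Your overall architecture is the same as the paper's: lift through the universal covering $\mathbb{H}\to\mathbb{D}^{*}$, classify the monodromy $\rho$ of the generator of the deck group, descend suitable $1$-periodic functions to $\mathbb{D}^{*}$, and finish each case with a removable-singularity argument and a change of coordinate. Your elliptic/identity cases match Lemma \ref{lem:main}(2)(3) of the paper almost verbatim. Your exclusion of hyperbolic monodromy, however, is genuinely different and is correct: the paper (following Yamada) uses the Schwarz--Pick inequality (Lemma \ref{lem:nonincreasing}) together with the displacement-infimum criterion (Lemma \ref{lem:infimum}), whereas you exponentiate and derive a winding-number contradiction; your route stays entirely inside complex analysis. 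Likewise, in the parabolic case your function $\phi(w)=w\,e^{2\pi i\widetilde{G}(w)}$ is \emph{bounded}, so Riemann removability applies directly; this is cleaner than the paper's argument, which works with the reciprocal (bounded below) and has to invoke the Great Picard and Casorati--Weierstrass theorems.

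There is, however, a genuine gap in the parabolic case, at the very first step: ``conjugate so that $\rho(\zeta)=\zeta+1$'' is not a legitimate normalization. Parabolic elements form \emph{two} conjugacy classes in $\mathrm{PSL}(2,\mathbb{R})$, represented by $\zeta\mapsto\zeta+1$ and $\zeta\mapsto\zeta-1$; they are interchanged only by the orientation-reversing isometry $\zeta\mapsto-\overline{\zeta}$, which you cannot post-compose with a holomorphic developing map. This is precisely why the paper's proof of Lemma \ref{lem:main}(1) splits into the sub-cases $t>0$ and $t<0$ and devotes case (i) to ruling out $t<0$; your proposal silently assumes the favorable sign. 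Fortunately your own device closes the gap: if $\widetilde{F}(\tau+1)=\widetilde{F}(\tau)-1$, then $\widetilde{G}$ (descended from $\widetilde{F}(\tau)+\tau$) satisfies $\mathrm{Im}\,\widetilde{G}(w)>-\log|w|/(2\pi)$, so $\psi(w):=e^{2\pi i\widetilde{G}(w)}/w$ is bounded and extends over $0$; but then $e^{2\pi i\widetilde{G}}=w\,\psi(w)$ has winding number $1+\mathrm{ord}_{0}\psi\geq1$ around small circles, contradicting the fact that it is the exponential of the single-valued function $2\pi i\widetilde{G}$, whose winding number is $0$. Two smaller points: in your ``simple zero'' argument for $\phi$ you must also exclude $\phi(0)\neq0$ (winding number $-1$), not only zeros of order $\geq2$; and the uniqueness assertions, which are part of Theorem \ref{thm:main}, cannot be dismissed as ``residual rotational freedom'' --- the paper's proof uses that any two developing maps differ by an element of $\mathrm{PSU}(1,1)$ (resp.\ $\mathrm{PSL}(2,\mathbb{R})$) and that fixing the image of the puncture forces this element to be a rotation (resp.\ affine), and that short computation is actually needed.
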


From the proof of Theorem \ref{thm:main}, we can directly obtain the local expressions of developing maps near isolated singularities of the metrics.

\begin{thm}
\label{thm:local model of developing map}
  Let\ $F:\Sigma\backslash {\rm supp}\, D\longrightarrow \mathbb{D}$ be a developing map of the singular conformal hyperbolic metric $\mathrm{d} \sigma^2$ {\it representing $D$}. If $p$ is a conical singularity of $\mathrm{d} \sigma^2$, then there exists a neighborhood\ $U$ of\ $p$\ with complex coordinate\ $z$\ and\ $\mathfrak{L}\in \textup{PSU}(1,1)$ such that\ $z(p)=0$\ and\ $G=\mathfrak{L}\circ F$\ has the form\ $G(z)=z^{\alpha}$. If $q$ is a cusp singularity of $\mathrm{d} \sigma^2$, we assume $F:\Sigma\backslash {\rm supp}\, D\longrightarrow \mathbb{H}$ for convenience, where $\mathbb{H}$ is the upper half-plane model of the hyperbolic plane, then there exists a neighborhood\ $V$ of\ $q$\ with complex coordinate\ $z$\ and\ $\mathfrak{L}\in \textup{PSL}(2,\mathbb{R})$, such that\ $z(q)=0$\ and\ $G=\mathfrak{L}\circ F$\ has the form\ $G(z)=-\sqrt{-1}\log z$.
\end{thm}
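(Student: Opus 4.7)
The plan is to deduce Theorem \ref{thm:local model of developing map} from Theorem \ref{thm:main} by combining the explicit canonical form of the metric with the uniqueness of developing maps up to isometry. At a conical singularity $p$, Theorem \ref{thm:main} furnishes a coordinate $z$ with $z(p)=0$ in which
$$d\sigma^2 = \frac{4\alpha^2|z|^{2\alpha-2}}{(1-|z|^{2\alpha})^2}|dz|^2.$$
I would then verify by direct substitution $w = z^\alpha$ that the pullback of the Poincar\'e metric $\frac{4|dw|^2}{(1-|w|^2)^2}$ on $\mathbb{D}$ is precisely this form. Hence, on the universal cover of the punctured neighborhood (or on any simply connected slit of it), the single-valued branch $G(z)=z^{\alpha}$ is itself a developing map of $d\sigma^2$ into $\mathbb{D}$. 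At a cusp $q$, Theorem \ref{thm:main} gives a coordinate $z$ in which $d\sigma^2 = |z|^{-2}(\ln|z|)^{-2}|dz|^2$; the substitution $w = -\sqrt{-1}\log z$ has $\operatorname{Im} w = -\ln|z| > 0$ and $|dw|^2=|dz|^2/|z|^2$, so the Poincar\'e metric $\frac{|dw|^2}{(\operatorname{Im} w)^2}$ on $\mathbb{H}$ pulls back to exactly $|z|^{-2}(\ln|z|)^{-2}|dz|^2$. Thus $G(z) = -\sqrt{-1}\log z$ is a developing map of $d\sigma^2$ into $\mathbb{H}$.

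Next, I would invoke the standard rigidity principle for conformal hyperbolic metrics: any two developing maps of the same metric on a connected open set into the same model of the hyperbolic plane differ by postcomposition with an orientation-preserving isometry, namely an element of $\mathrm{PSU}(1,1)$ for the disk model, respectively $\mathrm{PSL}(2,\mathbb{R})$ for the upper half-plane. Applying this to the given developing map $F$ restricted to the neighborhood produced by Theorem \ref{thm:main} and to the explicit developing map $G$ constructed above immediately yields an $\mathfrak{L}$ of the required type satisfying $G = \mathfrak{L} \circ F$, which is exactly the conclusion of the theorem.

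The main point requiring care is the multivaluedness of $z^{\alpha}$ and $\log z$: both are naturally defined on the universal cover of the punctured neighborhood, where $F$ is single-valued as well, so the identity $G = \mathfrak{L}\circ F$ is to be read there. The statement is branch-independent because different branches of $z^{\alpha}$ differ by multiplication by a unimodular constant $e^{2\pi i k\alpha}$, which lies in $\mathrm{PSU}(1,1)$, while different branches of $-\sqrt{-1}\log z$ differ by real translation by $2\pi k$, which lies in $\mathrm{PSL}(2,\mathbb{R})$. Beyond this bookkeeping, no new analytic input is needed: the entire argument reduces to two direct metric computations and an appeal to the basic uniqueness property of developing maps, so the substantive work has already been carried out in Theorem \ref{thm:main}.
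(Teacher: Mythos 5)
Your proposal is correct, and its logical skeleton---exhibit an explicit local developing map in the canonical coordinate, then invoke the uniqueness of developing maps up to isometry (Lemma \ref{lem:devloping map} and Remark \ref{rem:devloping map})---is the same as the paper's. The difference is the direction of derivation. The paper obtains Theorem \ref{thm:local model of developing map} as a byproduct of the \emph{proof} of Lemma \ref{lem:main}: there the developing maps $\xi^{k+\alpha}$ and $-\sqrt{-1}\log\xi$ are constructed first (via the Fourier expansion of the periodic part of $f$ and a coordinate change), and the canonical metric forms of Theorem \ref{thm:main} are then computed as their pullbacks. You go the other way: you take the \emph{statement} of Theorem \ref{thm:main} as a black box and verify by direct pullback computation that $z^{\alpha}$ and $-\sqrt{-1}\log z$ are developing maps of the canonical metrics, which re-creates the explicit maps the paper already has in hand. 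Your route is more modular---it would remain valid even if Theorem \ref{thm:main} had been established by entirely different (e.g.\ PDE) methods---at the cost of redoing a computation that the paper's construction yields for free. Your attention to multivaluedness is also the right bookkeeping: since different branches of $z^{\alpha}$ differ by unimodular rotations in $\textup{PSU}(1,1)$ and different branches of $-\sqrt{-1}\log z$ by real translations in $\textup{PSL}(2,\mathbb{R})$, your $G$ is a projective function with monodromy in the correct isometry group, hence a legitimate developing map in the sense of Definition \ref{defi:developing map}, so that Lemma \ref{lem:devloping map} applies on the punctured neighborhood exactly as you claim.
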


Moreover, the proof process of Theorem \ref{thm:main} provide a possible approach for studying the codimension-one singularities of complex hyperbolic metrics in higher dimension. We shall investigate the following problem in the near future.
\begin{prob}
\label{pro:local}
What is the asymptotic behavior of a complex hyperbolic metric on $\underbrace{\mathbb{D}\times\cdots\times\mathbb{D}}\limits_n\backslash \{z_{1}z_{2}\cdots z_{n}=0\}$ for $n\geq2$?
\end{prob}

We organize the left part of the manuscript as follows. In section 2, we at first give some knowledge of hyperbolic geometry that we need to use. Then we state the definition and properties of developing maps. Section 3 is the proof for Theorem \ref{thm:main} and Theorem \ref{thm:local model of developing map}.

\section{Preliminaries}

\subsection{Conformal isometries of the hyperbolic plane}
We will work with both the Poincar{\' e} disk model\ $\left(\mathbb{D}=\{z\in \mathbb{C}:\vert z \vert<1\},\ \mathrm{d} \sigma^2_{\mathbb{D}}=\frac{4\vert dz \vert^{2}}{(1-\vert z \vert^{2})^{2}}\right)$ and the upper half-plane model \ $\left(\mathbb{H}=\{z\in \mathbb{C}:\textup{Im z}>0\},\ \mathrm{d} \sigma^2_{\mathbb{H}}=\frac{\vert dz \vert^{2}}{(\textup{Im z})^{2}}\right)$ of the hyperbolic plane at convenience.
We denote
$$\textup{PSU(1,1)}=\{z\longmapsto\frac{az+b}{\overline{b}z+\overline{a}}:\ a,\ b\in \mathbb{C},\ \vert a \vert^2-\vert b \vert^2=1\}$$
and
$$\textup{PSL}(2,\mathbb{R})=\{z\longmapsto\frac{az+b}{cz+d}:\ a,\ b,\ c,\ d\in \mathbb{R},\ ad-bc=1\}$$
the group of all orientation-preserving isometries of\ $\mathbb{D}$ and\ $\mathbb{H}$, respectively.

\begin{defi}\rm{\cite[p.136]{R06}}.
   If\ $\mathfrak{L},\ \mathfrak{K}\in\textup{I}(\mathbb{D})$ and\ $\mathfrak{L}$ is not the identity map of ${\Bbb D}$, then\ $\mathfrak{L}$ has a fixed point in\ $\overline{\mathbb{D}}$, where $\textup{I}(\mathbb{D})$ is the isometry group of $\mathbb{D}$.. The transformation\ $\mathfrak{L}$ is said to be

\begin{itemize}
         \item {\it elliptic} if\ $\mathfrak{L}$ fixes a point of\ $\mathbb{D}$;

         \item {\it parabolic} if\ $\mathfrak{L}$ fixes no point of\ $\mathbb{D}$ and fixes a unique point of \ $\partial{\mathbb{D}}$;

         \item {\it hyperbolic} if\ $\mathfrak{L}$ fixes no point of\ $\mathbb{D}$ and fixes two points of \ $\partial{\mathbb{D}}$.

\end{itemize}

\end{defi}

\begin{lem}\rm{\cite[Theorem 2.3, p.67]{A05}}, \rm{\cite[p.172-173]{B83}}.
\label{lem:classification}
\it{In the Poincar{\' e} disk model\ $\mathbb{D}$,  if\ $\mathfrak{L}\in \textup{PSU(1,1)}$ is elliptic, then there exists\ $\mathfrak{K}\in \textup{PSU(1,1)}$ such that\ $\mathfrak{K}\circ\mathfrak{L}\circ\mathfrak{K^{-1}}(z)=e^{i\theta}z$\ for some real number\ $\theta$.

In the upper half-plane model\ $\mathbb{H}$,  if\ $\mathfrak{L}\in \textup{PSL}(2,\mathbb{R})$ is parabolic, then there exists\ $\mathfrak{K}\in \textup{PSL}(2,\mathbb{R})$ such that\ $\mathfrak{K}\circ\mathfrak{L}\circ\mathfrak{K^{-1}}(z)=z+t$  for some real number\ $t$.

In the upper half-plane model\ $\mathbb{H}$,  if\ $\mathfrak{L}\in \textup{PSL}(2,\mathbb{R})$ is hyperbolic, then there exists\ $\mathfrak{K}\in \textup{PSL}(2,\mathbb{R})$ such that\ $\mathfrak{K}\circ\mathfrak{L}\circ\mathfrak{K^{-1}}(z)=\lambda z$ for some positive real number\ $\lambda$.}

\end{lem}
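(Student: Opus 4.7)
The proof plan is to treat each of the three cases in turn by using transitivity of the relevant isometry group to conjugate the fixed-point set into a standard position, and then to compute the corresponding stabilizer. The key observation in each case is that once the fixed-point structure is normalized, the element of the isometry group is forced into the claimed one-parameter family.

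For the elliptic case, I would first note that $\textup{PSU}(1,1)$ acts transitively on $\mathbb{D}$: given any $p \in \mathbb{D}$, the Möbius transformation $z \mapsto (z-p)/(1-\overline{p}z)$ lies in $\textup{PSU}(1,1)$ and sends $p$ to $0$. Let $p$ be the fixed point of the elliptic element $\mathfrak{L}$ and choose $\mathfrak{K} \in \textup{PSU}(1,1)$ with $\mathfrak{K}(p) = 0$. Then $\mathfrak{K}\circ\mathfrak{L}\circ\mathfrak{K}^{-1}$ fixes $0$. I would then identify the stabilizer of $0$ in $\textup{PSU}(1,1)$: from $\frac{az+b}{\overline{b}z+\overline{a}}\big|_{z=0} = 0$ we get $b=0$, and combined with $|a|^2 - |b|^2 = 1$ this forces $a = e^{i\theta/2}$ for some real $\theta$, giving the normal form $z \mapsto e^{i\theta}z$.

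For the parabolic case, I would use that $\textup{PSL}(2,\mathbb{R})$ acts transitively on $\partial\mathbb{H} = \mathbb{R}\cup\{\infty\}$ (for $p\in\mathbb{R}$, the translation $z \mapsto z-p$ and the inversion $z\mapsto -1/z$ are available in $\textup{PSL}(2,\mathbb{R})$, which together reach $\infty$). Conjugating the unique boundary fixed point of $\mathfrak{L}$ to $\infty$ by some $\mathfrak{K}$, the conjugate fixes $\infty$; from $\frac{az+b}{cz+d}$ this forces $c=0$, hence $d = 1/a$, and the map becomes $z \mapsto a^2 z + ab$ with $a\in\mathbb{R}^*$. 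Writing $\mu = a^2 > 0$ and $\nu = ab$, the condition that the original map is parabolic (no other fixed point) translates to $\mu = 1$, because otherwise $z = \nu/(1-\mu)$ would be a second fixed point in $\mathbb{R}$. Setting $t = \nu$ yields the normal form $z \mapsto z + t$.

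For the hyperbolic case, I would use that $\textup{PSL}(2,\mathbb{R})$ acts transitively on ordered pairs of distinct points of $\partial\mathbb{H}$: any two points $p\neq q$ in $\partial\mathbb{H}$ can be sent to $(0,\infty)$ by an explicit Möbius transformation (built e.g.\ from $z\mapsto (z-p)/(z-q)$ rescaled to have real determinant of the right sign). Conjugating the two boundary fixed points of $\mathfrak{L}$ to $0$ and $\infty$, the conjugate $\frac{az+b}{cz+d}$ must satisfy $b=0$ (fixes $0$) and $c=0$ (fixes $\infty$), so it has the form $z \mapsto a^2 z$ with $a\in\mathbb{R}^*$; setting $\lambda = a^2 > 0$ gives the normal form $z \mapsto \lambda z$, and the hyperbolic hypothesis forces $\lambda \neq 1$. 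There is no serious obstacle here — the only point to verify carefully is the transitivity statements on $\mathbb{D}$, on $\partial\mathbb{H}$, and on ordered pairs of distinct boundary points, each of which is immediate from the explicit description of the two groups.
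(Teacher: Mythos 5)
The paper offers no proof of this lemma to compare against: it is quoted as a standard fact from \cite{A05} and \cite{B83}, so the only question is whether your argument stands on its own. It does, and it is the standard textbook argument: use transitivity of the isometry group to move the fixed-point set into normal position ($0$ for elliptic, $\infty$ for parabolic, $\{0,\infty\}$ for hyperbolic), then compute the stabilizer explicitly. Your stabilizer computations are all correct --- elliptic: $b=0$ and $|a|=1$ give $z\mapsto e^{i\theta}z$; parabolic: $c=0$ gives $z\mapsto a^{2}z+ab$, and uniqueness of the fixed point forces $a^{2}=1$; hyperbolic: $b=c=0$ gives $z\mapsto a^{2}z$ with $a^{2}>0$ automatic from the real normalization.

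Two small points of precision, neither of which damages the structure. First, in the hyperbolic case the parenthetical ``rescaled to have real determinant of the right sign'' cannot be taken literally: rescaling numerator and denominator of $z\mapsto(z-p)/(z-q)$ by a real constant $\lambda$ multiplies the determinant by $\lambda^{2}>0$, so it can never repair a negative determinant. When $p<q$ (determinant $p-q<0$) you must instead use $z\mapsto -(z-p)/(z-q)$, whose matrix has determinant $q-p>0$ and which still sends $p\mapsto 0$, $q\mapsto\infty$. Alternatively, observe that transitivity on \emph{unordered} pairs suffices: whichever of $\pm(z-p)/(z-q)$ lies in $\textup{PSL}(2,\mathbb{R})$ sends $\{p,q\}$ onto $\{0,\infty\}$, and since conjugation carries fixed points to fixed points, the conjugated element then fixes $0$ and $\infty$ individually, which is all your stabilizer computation needs. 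Second, your explicit formulas tacitly assume both fixed points are finite; if one of them is $\infty$ you need the trivial variants $z\mapsto z-p$ or $z\mapsto -1/(z-q)$ (determinant $1$). With these cosmetic repairs the proof is complete.
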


\begin{lem}\rm{\cite[Theorem 4.16, p.141]{A05}}

\label{lem:nonincreasing}
\it{
Let $f:\mathbb{H}\rightarrow\mathbb{H}$ be holomorphic. If $f$ is a homeomorphism, then $f\in\textup{PSL}(2,\mathbb{R})$. If $f$ is not a homeomorphism, then $d_{\mathbb{H}}(f(z_{1}),f(z_{2}))<d_{\mathbb{H}}(z_{1},z_{2})$ for all distinct $z_{1},z_{2}\in\mathbb{H}$, where $d_{\mathbb{H}}$ denotes the hyperbolic distance of $\mathbb{H}$.}

\end{lem}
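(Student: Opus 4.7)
My plan is to deduce the lemma from the classical Schwarz lemma after transferring the problem to the Poincar\'e disk model. Let $\phi : \mathbb{H} \to \mathbb{D}$ denote the Cayley transform; since $\phi$ is a hyperbolic isometry, setting $g := \phi \circ f \circ \phi^{-1}$ reduces matters to the analogous statement for a holomorphic map $g : \mathbb{D} \to \mathbb{D}$, with $\textup{PSL}(2,\mathbb{R})$ replaced by $\textup{PSU}(1,1)$.

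The heart of the argument is the \emph{equality case} of the Schwarz--Pick inequality. Fix distinct $w_1, w_2 \in \mathbb{D}$ and pick $\psi_1, \psi_2 \in \textup{PSU}(1,1)$ with $\psi_1(0) = w_1$ and $\psi_2(g(w_1)) = 0$. The composition $h := \psi_2 \circ g \circ \psi_1 : \mathbb{D} \to \mathbb{D}$ is holomorphic with $h(0) = 0$, so the classical Schwarz lemma yields $|h(z)| \leq |z|$ for all $z \in \mathbb{D}$, with equality at some nonzero point if and only if $h$ is a rotation. Evaluating at $z_0 := \psi_1^{-1}(w_2)$ and using that $\psi_1, \psi_2$ preserve $d_{\mathbb{D}}$ translates this to
\[
d_{\mathbb{D}}(g(w_1), g(w_2)) \leq d_{\mathbb{D}}(w_1, w_2),
\]
with equality for the pair $(w_1, w_2)$ if and only if $h$ is a rotation, equivalently $g \in \textup{PSU}(1,1)$.

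Combining these observations gives the required dichotomy. If $f$ is a holomorphic homeomorphism of $\mathbb{H}$, then $g$ is an injective holomorphic bijection of $\mathbb{D}$; by the inverse function theorem such a map is automatically a biholomorphism, so $g \in \textup{PSU}(1,1)$ and therefore $f \in \textup{PSL}(2,\mathbb{R})$. Conversely, if $f$ is not a homeomorphism, then $g \notin \textup{PSU}(1,1)$, and the equality case above forces the Schwarz--Pick inequality to be strict at every pair of distinct points. The main obstacle, and the only nontrivial ingredient, is the sharpness assertion in the Schwarz lemma: it is what promotes the weak contraction to a strict one as soon as $g$ fails to be a full automorphism. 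Everything else is bookkeeping with the isometric Cayley transform and M\"obius normalizations.
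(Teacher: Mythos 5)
The paper does not prove this lemma at all: it is quoted verbatim from Anderson \cite[Theorem 4.16]{A05} as a known black-box result, so there is no internal proof to compare against. Your argument is correct and is essentially the standard proof that the cited reference itself uses: transfer to $\mathbb{D}$ by the (isometric) Cayley transform, apply the Schwarz lemma to the normalized map $h=\psi_2\circ g\circ\psi_1$ fixing $0$, and exploit the equality case to get the dichotomy --- equality of hyperbolic distances at one pair of distinct points forces $h$ to be a rotation, hence $g\in\textup{PSU}(1,1)$, while otherwise the contraction is strict everywhere. The translation between $|h(z_0)|\leq|z_0|$ and the distance inequality is legitimate because $d_{\mathbb{D}}(0,z)$ is a strictly increasing function of $|z|$, a point worth making explicit.

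Two small points deserve tightening, though neither is a gap. First, in the homeomorphism case, the relevant fact is not really the inverse function theorem but the complex-analytic fact that an injective holomorphic map has nowhere-vanishing derivative (hence holomorphic inverse); once $g$ is known to be a biholomorphism of $\mathbb{D}$, the identification $\textup{Aut}(\mathbb{D})=\textup{PSU}(1,1)$ is itself another application of the Schwarz lemma (to $h$ and $h^{-1}$), and you should either prove it or cite it rather than assert it. Second, the reverse implication in your ``equality iff $g\in\textup{PSU}(1,1)$'' claim (that automorphisms preserve distance) again uses that an automorphism fixing $0$ is a rotation; since only the forward implication is needed for the lemma, you could simply drop the ``if and only if.''
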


\begin{lem}\rm{\cite[p.176]{B83}}
\label{lem:infimum}
\it{
For any isometry $\mathfrak{L}$ let $m$ be the infimum of $d_{\mathbb{H}}(z,\mathfrak{L}{z})$ taken with respect to $z\in\mathbb{H}$. Then $\mathfrak{L}$ is hyperbolic if and only if $m>0$; if $m=0$, then $\mathfrak{L}$ is elliptic when $m$ is attained and parabolic when $m$ is not attained.}
\end{lem}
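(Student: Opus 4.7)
The plan is to use the conjugation-invariance of $m$ together with Lemma \ref{lem:classification} to reduce to standard normal forms, and then read off $m$ directly on each one. I first observe that for any isometry $\mathfrak{K}$,
\[
d_{\mathbb{H}}(z,\mathfrak{K}\mathfrak{L}\mathfrak{K}^{-1}z)\;=\;d_{\mathbb{H}}(\mathfrak{K}^{-1}z,\mathfrak{L}\mathfrak{K}^{-1}z),
\]
and as $z$ ranges over $\mathbb{H}$ so does $\mathfrak{K}^{-1}z$. Consequently both the value of $m$ and the question of whether it is attained depend only on the conjugacy class of $\mathfrak{L}$, and fixed points being preserved under conjugation, so does the elliptic/parabolic/hyperbolic classification.

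Treating the three cases allowed by the classification: in the elliptic case, by definition $\mathfrak{L}$ fixes some $z_{0}\in\mathbb{H}$, which gives $d_{\mathbb{H}}(z_{0},\mathfrak{L}z_{0})=0$, so $m=0$ and is attained at $z_{0}$. In the parabolic case, Lemma \ref{lem:classification} lets me conjugate $\mathfrak{L}$ to $z\mapsto z+t$ with $t\in\mathbb{R}\setminus\{0\}$; using the standard identity
\[
\cosh d_{\mathbb{H}}(z_{1},z_{2})\;=\;1+\frac{|z_{1}-z_{2}|^{2}}{2\,\mathrm{Im}(z_{1})\mathrm{Im}(z_{2})},
\]
the displacement at $x+iy$ is $\cosh^{-1}\!\bigl(1+t^{2}/(2y^{2})\bigr)$, which is strictly positive for every $y>0$ but tends to $0$ as $y\to\infty$, so $m=0$ but is not attained. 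In the hyperbolic case, conjugating to $z\mapsto\lambda z$ with $\lambda>0$, $\lambda\neq 1$, the same identity gives
\[
\cosh d_{\mathbb{H}}(z,\lambda z)\;=\;1+\frac{(\lambda-1)^{2}(x^{2}+y^{2})}{2\lambda y^{2}},
\]
which is minimized when $x=0$, where it equals $(\lambda^{2}+1)/(2\lambda)=\cosh(\log\lambda)$; hence $m=|\log\lambda|>0$ and is attained all along the positive imaginary axis.

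Since elliptic, parabolic, and hyperbolic exhaust every non-identity isometry of $\mathbb{H}$, assembling these three computations yields both the biconditional $m>0\Leftrightarrow\mathfrak{L}$ hyperbolic and the attained/not-attained dichotomy when $m=0$. There is no deep obstacle in the argument; the only care required is bookkeeping, namely verifying that the formula for $\cosh d_{\mathbb{H}}$ used above is the one induced by the metric $|dz|^{2}/(\mathrm{Im}\,z)^{2}$ fixed in Section 2.1, and that Lemma \ref{lem:classification}'s normal forms indeed cover every parabolic and every hyperbolic element of $\textup{PSL}(2,\mathbb{R})$.
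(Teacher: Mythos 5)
Your argument is correct, but there is nothing in the paper to compare it against: the paper states this lemma as a quotation from Beardon \cite[p.176]{B83} and gives no proof of its own, using it only as a black box in the proof of Lemma \ref{lem:not hyperbolic}. Your route---conjugation invariance of the displacement infimum, reduction to the normal forms supplied by Lemma \ref{lem:classification}, and explicit evaluation via the identity $\cosh d_{\mathbb{H}}(z_{1},z_{2})=1+\tfrac{|z_{1}-z_{2}|^{2}}{2\,\mathrm{Im}(z_{1})\,\mathrm{Im}(z_{2})}$---is the standard one, and is in substance how Beardon himself obtains the result (his displacement formulas for parabolic and hyperbolic elements are exactly your two computations in disguise). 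The computations check out: for $z\mapsto z+t$ the displacement $\cosh^{-1}\bigl(1+t^{2}/(2y^{2})\bigr)$ is positive everywhere and tends to $0$, and for $z\mapsto\lambda z$ the minimum $\cosh^{-1}\bigl((\lambda^{2}+1)/(2\lambda)\bigr)=|\log\lambda|$ is attained on the imaginary axis. Three bookkeeping points you should make explicit rather than leave implicit: (i) the normal forms have $t\neq 0$ and $\lambda\neq 1$, because $t=0$ or $\lambda=1$ would make $\mathfrak{L}$ the identity rather than parabolic or hyperbolic; (ii) your three forward implications give the stated biconditionals only because the three classes are mutually exclusive and exhaust the non-identity elements of $\textup{PSL}(2,\mathbb{R})$---your closing sentence asserts this, and it is exactly where the proof of the ``only if'' directions lives; and (iii) the exhaustion holds for orientation-preserving isometries, which suffices here since the lemma is applied to $\mathfrak{L}=\mathcal{M}(\tau)\in\textup{PSU}(1,1)$, while the identity map (for which $m=0$ is attained) must be counted as elliptic or excluded by convention, a point the paper's Definition 2.1 also leaves aside.
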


\subsection{Developing map}
\paragraph{}A multi-valued locally univalent meromorphic function\ $F$\ on a Riemann surface\ $\Sigma$ is said to be {\it projective} if any two function elements\ $\mathfrak{F_{1}}, \mathfrak{F_{2}}$\ of\ $F$ near a point\ $p\in \Sigma$  are related by a fractional linear transformation\ $\mathfrak{L}\in \textup{PGL}(2,\mathbb{C})$, i.e.,\ $\mathfrak{F_{1}}=\mathfrak{L}\circ \mathfrak{F_{2}}$.

\begin{defi}\label{defi:developing map}
\textup{Let\ $\mathrm{d} \sigma^2$\ be a conformal hyperbolic metric on a Riemann surface\ $\Sigma$, not necessarily compact, representing the divisor\ $\textup{D}$. We call a projective function\ $F:\Sigma\backslash {\rm supp}\, D\longrightarrow \mathbb{D}$\ a {\it developing map} of the metric\ $\mathrm{d} \sigma^2$\ if\ $\mathrm{d} \sigma^2=F^{*}\mathrm{d} \sigma^2_{\mathbb{D}}$, where\ $\mathrm{d} \sigma^2_{\mathbb{D}}=\frac{|\mathrm{d} z|^{2}}{(1-|z|^2)^{2}}$\ is the hyperbolic metric on the unit disc\ $\mathbb{D}$. And $F$ can also be viewed as a locally schlicht holomorphic function from $\widetilde{\Sigma}$ to $\mathbb{D}$, where $\widetilde{\Sigma}$ is the universal cover of $\Sigma\backslash {\rm supp}\, D$.}
\end{defi}

\begin{lem}\rm{\cite[lemma 2.1 and Lemma 2.2]{FSX}}
\label{lem:devloping map}

{\it Let\ $\mathrm{d} \sigma^2$\ be a conformal hyperbolic metric on a Riemann surface\ $\Sigma$, representing the divisor $\textup{D}$. Then there exists a developing map\ $F$\ from\ $\Sigma\backslash {\rm supp}\, D$\ to the unit disc\ $\mathbb{D}$ such that the monodromy of\ $F$\ belongs to \textup{PSU(1,1)} and
\[
\mathrm{d} \sigma^2=F^{*}\mathrm{d} \sigma^2_{\mathbb{D}},
\]
where\ $\mathrm{d} \sigma^2_{\mathbb{D}}=\frac{|\mathrm{d} z|^{2}}{(1-|z|^2)^{2}}$ is the hyperbolic metric on\ $\mathbb{D}$. Moreover, any two developing maps\ $F_{1}$,\ $F_{2}$ of the metric\ $\mathrm{d} \sigma^2$\ are related by a fractional linear transformation\ $\mathfrak{L}\in \textup{PSU}(1,1)$, i.e.,\ $F_{2}=\mathfrak{L}\circ F_{1}$.}
\end{lem}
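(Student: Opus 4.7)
The plan is to build a developing map locally from the Liouville equation, extend it by the monodromy theorem on the universal cover, show that its image stays in $\mathbb{D}$, and then deduce the monodromy statement and uniqueness from the isometry characterization of $\mathrm{PSU}(1,1)$.

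For the local step, write $\mathrm{d}\sigma^2=e^{2u}|\mathrm{d}z|^2$ in a coordinate chart disjoint from ${\rm supp}\,D$. Since the curvature is $-1$, $u$ satisfies $4u_{z\bar z}=e^{2u}$, from which a direct differentiation shows that
\[
\phi(z):=2\bigl(u_{zz}-u_z^{2}\bigr)
\]
is holomorphic. I would then solve the Schwarzian equation $\{F,z\}=\phi$ locally via the standard linear ODE $y''+\tfrac12\phi\,y=0$: two independent holomorphic solutions $y_1,y_2$ give $F=y_1/y_2$, locally univalent because their Wronskian is a nonzero constant. A routine calculation shows that any such $F$ satisfies $|F'|^2/(1-|F|^2)^2=e^{2u}$ after fixing the $\mathrm{PGL}(2,\mathbb{C})$-ambiguity; normalizing this free constant so that $F$ sends a chosen base point to $0\in\mathbb{D}$ with the correct derivative modulus, $F$ becomes a local holomorphic isometry of $(\Sigma\setminus{\rm supp}\,D,\mathrm{d}\sigma^2)$ into $(\mathbb{D},\mathrm{d}\sigma^2_{\mathbb{D}})$.

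For globalization, I would lift $\mathrm{d}\sigma^2$ to the universal cover $\widetilde{\Sigma}$ of $\Sigma\setminus{\rm supp}\,D$. Because $\widetilde\Sigma$ is simply connected and the equation for $y_1,y_2$ is a linear holomorphic ODE, the monodromy theorem extends the local solutions to globally defined holomorphic functions on $\widetilde{\Sigma}$, so $F=y_1/y_2$ is a globally defined meromorphic map on $\widetilde{\Sigma}$. The step I expect to be the main obstacle is verifying that the image of $F$ actually lies in $\mathbb{D}$: the set $\{|F|<1\}$ is nonempty (local construction) and open (continuity), and I would argue it is also closed in $\widetilde{\Sigma}$, because at any point with $|F|=1$ the identity $e^{2u}=|F'|^2/(1-|F|^2)^2$ would force $u\to+\infty$, contradicting the smoothness of $u$ on $\Sigma\setminus{\rm supp}\,D$. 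By connectedness, $F(\widetilde\Sigma)\subset\mathbb{D}$, and $F^{\ast}\mathrm{d}\sigma^2_{\mathbb{D}}=\mathrm{d}\sigma^2$ on $\widetilde\Sigma$ by analytic continuation of the local identity.

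Finally, for the monodromy and uniqueness claims, observe that for any deck transformation $\gamma$ of $\widetilde{\Sigma}\to\Sigma\setminus{\rm supp}\,D$, the map $F\circ\gamma$ is another local holomorphic isometry of the lifted metric into $\mathbb{D}$; thus $(F\circ\gamma)\circ F^{-1}$ is locally a holomorphic self-isometry of $(\mathbb{D},\mathrm{d}\sigma^2_{\mathbb{D}})$, and by the disk-model analog of Lemma~\ref{lem:nonincreasing} must be an element $\mathfrak{L}_\gamma\in\mathrm{PSU}(1,1)$; analytic continuation then gives $F\circ\gamma=\mathfrak{L}_\gamma\circ F$ on all of $\widetilde\Sigma$. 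The same comparison applied to any two developing maps $F_1,F_2$ of the same metric yields $F_2=\mathfrak{L}\circ F_1$ for some $\mathfrak{L}\in\mathrm{PSU}(1,1)$, proving uniqueness.
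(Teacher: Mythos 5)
You cannot be checked against an in-paper argument here, because the paper never proves this lemma: it is quoted verbatim from \cite{FSX} (Lemmas 2.1 and 2.2 there), with the related Schwarzian input coming from \cite{LLX17}. Your route --- Liouville equation $4u_{z\bar z}=e^{2u}$, holomorphicity of $\phi=2(u_{zz}-u_z^2)$, the linear ODE $y''+\tfrac{1}{2}\phi y=0$, $F=y_1/y_2$, globalization by the monodromy theorem on $\widetilde{\Sigma}$, and monodromy in $\textup{PSU}(1,1)$ by isometry rigidity --- is precisely the classical proof of Liouville's theorem plus analytic continuation, which is the same circle of ideas the cited references use. The skeleton is right, and your open--closed argument that $F(\widetilde{\Sigma})\subset\mathbb{D}$ is correct as stated, since continued solutions of the Schwarzian equation remain locally univalent (so $F'\neq 0$ at a putative boundary point where $|F|=1$) while $u$ stays finite on $\Sigma\setminus{\rm supp}\,D$.

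Two steps, however, need repair. First, the key local step is under-justified: prescribing $F(p_0)=0$ and $|F'(p_0)|$ fixes only three real parameters of $\textup{PGL}(2,\mathbb{C})$, and the stabilizer of such a $1$-jet is still nontrivial (the maps $w\mapsto e^{i\theta}w/(1+cw)$), so this normalization alone cannot force the identity $e^{2u}=|F'|^2/(1-|F|^2)^2$ on an open set; the ``routine calculation'' is actually the heart of Liouville's theorem. The standard way to close it: observe that $s=e^{-u}$ satisfies the \emph{same} ODE $s_{zz}+\tfrac{1}{2}\phi s=0$ in $z$ (with $\bar z$ as a parameter), hence $e^{-u}=\sum_{j,k}a_{jk}\,y_j\overline{y_k}$ with $(a_{jk})$ a constant Hermitian matrix; the Liouville equation then forces $\det(a_{jk})<0$ (signature $(1,1)$, with determinant tied to the Wronskian), and diagonalizing this form by an $\textup{SL}(2,\mathbb{C})$ change of the basis $(y_1,y_2)$ produces the Möbius representative $F$ for which $e^{-u}=c\,(|y_2|^2-|y_1|^2)$, i.e.\ the isometry identity and $|F|<1$ simultaneously. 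Second, your appeal to Lemma \ref{lem:nonincreasing} is a misapplication: that lemma concerns holomorphic maps defined on \emph{all} of $\mathbb{H}$, whereas $(F\circ\gamma)\circ F^{-1}$ is a priori only a germ of an isometry between open subsets of $\mathbb{D}$ ($F$ need not be injective). What you need is the rigidity fact that a holomorphic local isometry between connected open subsets of the Poincar\'e disk is the restriction of an element of $\textup{PSU}(1,1)$ --- provable, in the spirit of your own argument, by noting the isometry identity forces its Schwarzian to vanish, so it is a Möbius map whose $2$-jet (determined by the isometry condition) matches that of a disk automorphism; the identity theorem then gives $F\circ\gamma=\mathfrak{L}_\gamma\circ F$ globally, and likewise the uniqueness claim. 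A final small point: $y_1,y_2$ do not continue across charts as functions (the ODE is coordinate-dependent), so you should continue $F$ itself, using that the local $\phi$'s patch as a projective connection, making the Schwarzian equation chart-independent.
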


\begin{rem}
\label{rem:devloping map}
 There exists an analogue of the above lemma on the upper half-plane model $\mathbb{H}$.
\end{rem}

\section{Proof for Theorem \ref{thm:main} and Theorem \ref{thm:local model of developing map}}
\paragraph{} Let $\mathrm{d} \sigma^2\in M(\mathbb{D}^{*})$ and $\mathbb{H}$ be the upper half-plane. Consider the universal covering from $\mathbb{H}$ to $\mathbb{D}^{*}$, $z\mapsto e^{iz}$, whose covering group $\Gamma$ is generated by $\tau(z)=z+2\pi$. Since $(e^{iz})^{*}\mathrm{d} \sigma^2\in M(\mathbb{H})$, there exists a locally schlicht holomorphic function $f$ from $\mathbb{H}$ to $\mathbb{D}$ such that $(e^{iz})^{*}\mathrm{d} \sigma^2=f^{*}\mathrm{d} \sigma^2_{\mathbb{D}}$ by Lemma \ref{lem:devloping map}. Moreover, we obtain the monodromy homomorphism $\mathcal{M}:\Gamma\rightarrow \textup{PSU(1,1)}$. So we have $f\circ\tau=\mathcal{M}(\tau)\circ f$, set $\mathfrak{L}=\mathcal{M}(\tau)$.

\begin{lem}\rm{\cite[Lemma 7]{Ya88}}
\label{lem:not hyperbolic}
\it{
$\mathfrak{L}$ is not a hyperbolic transformation.}
\end{lem}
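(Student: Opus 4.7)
The plan is to argue by contradiction. Suppose $\mathfrak{L}$ is hyperbolic. By Remark \ref{rem:devloping map} I may pass to the upper half-plane model: composing $f$ with a Cayley transform $h:\mathbb{D}\to \mathbb{H}$ produces a locally schlicht holomorphic $g=h\circ f:\mathbb{H}\to \mathbb{H}$ whose monodromy $hLh^{-1}\in \textup{PSL}(2,\mathbb{R})$ is still hyperbolic (type is conjugation-invariant). Then, by Lemma \ref{lem:classification}, after replacing $g$ by $\mathfrak{K}\circ g$ for a suitable $\mathfrak{K}\in \textup{PSL}(2,\mathbb{R})$, I may assume the monodromy has the normal form $\mathfrak{L}(w)=\lambda w$ for some $\lambda>0$, $\lambda\neq 1$, so that the functional equation $g\circ \tau=\mathfrak{L}\circ g$ reads
\[
g(z+2\pi)=\lambda\, g(z), \qquad z\in \mathbb{H}.
\]

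Next I would apply Lemma \ref{lem:nonincreasing} to $g:\mathbb{H}\to \mathbb{H}$, which splits into two cases. If $g$ is a homeomorphism, then $g\in \textup{PSL}(2,\mathbb{R})$ is an isometry, so the monodromy $\mathfrak{L}=g\circ\tau\circ g^{-1}$ is conjugate to the Euclidean translation $\tau(z)=z+2\pi$; since $\tau$ is parabolic, $\mathfrak{L}$ would be parabolic, contradicting our assumption. Otherwise $g$ is strictly hyperbolic-distance decreasing: $d_{\mathbb{H}}(g(z_1),g(z_2))<d_{\mathbb{H}}(z_1,z_2)$ for distinct $z_1,z_2$.

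In this second case, the key observation is that the Euclidean translation $\tau$ has vanishing hyperbolic displacement at the cusp: for $z=iy$ one has $d_{\mathbb{H}}(iy,iy+2\pi)=2\sinh^{-1}(\pi/y)\to 0$ as $y\to\infty$. Combining this with the functional equation and Lemma \ref{lem:nonincreasing} gives
\[
d_{\mathbb{H}}\bigl(g(iy),\lambda\, g(iy)\bigr)=d_{\mathbb{H}}\bigl(g(iy),g(iy+2\pi)\bigr)<d_{\mathbb{H}}(iy,iy+2\pi)\xrightarrow[y\to\infty]{}0.
\]
On the other hand, Lemma \ref{lem:infimum} asserts that for a hyperbolic isometry the infimum $m=\inf_{w\in \mathbb{H}}d_{\mathbb{H}}(w,\mathfrak{L}w)$ is strictly positive (here $m=|\log\lambda|>0$), so $d_{\mathbb{H}}(g(iy),\lambda g(iy))\geq m$ for every $y$. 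This contradiction closes the argument.

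The proof is essentially mechanical once the right viewpoint is adopted; there is no serious technical obstacle. The only conceptual point that needs care is the reduction to the canonical form $w\mapsto \lambda w$ while keeping the target in $\mathbb{H}$, which is permitted because replacing $g$ by an isometry of $\mathbb{H}$ does not alter either the pulled-back metric or the hyperbolic distances used in the estimate. The heart of the argument is the incompatibility between the parabolic nature of the deck transformation $\tau$ at the cusp $\operatorname{Im}z\to \infty$ and the uniform positive displacement of any hyperbolic monodromy, bridged by the Schwarz--Pick contraction of Lemma \ref{lem:nonincreasing}.
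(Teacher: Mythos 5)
Your proof is correct and follows essentially the same route as the paper's: bound the displacement $d(f(z),\mathfrak{L}f(z))$ by $d_{\mathbb{H}}(z,z+2\pi)$ via the Schwarz--Pick contraction of Lemma \ref{lem:nonincreasing}, observe this tends to $0$ as $\operatorname{Im}z\to\infty$, and conclude from Lemma \ref{lem:infimum} that $\mathfrak{L}$ cannot be hyperbolic. The only differences are cosmetic: the paper works directly with $f:\mathbb{H}\to\mathbb{D}$ and handles both cases of Lemma \ref{lem:nonincreasing} uniformly with a single $\leq$, so your normalization to $w\mapsto\lambda w$ and the separate homeomorphism case are unnecessary (though harmless) extra scaffolding.
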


\begin{proof}
By Lemma \ref{lem:nonincreasing},
\[
d_{\mathbb{D}}(f(z),\mathfrak{L}\circ {f(z)})=d_{\mathbb{D}}(f(z),f(z+2\pi))\leq d_{\mathbb{H}}(z,z+2\pi).
\]
Let $z=iy,\ \gamma(t)=t+iy,\ 0\leq t\leq2\pi$. Then the length of $\gamma(t)$ equals
\[
l(\gamma(t))=\int^{2\pi}_{0}\frac{1}{y}dt=\frac{2\pi}{y}.
\]
So $d_{\mathbb{H}}(z,z+2\pi)\rightarrow0$ as $y\rightarrow +\infty$. Hence $m=\inf d_{\mathbb{D}}(z,\mathfrak{L}\circ {z})=0$, $\mathfrak{L}$ is not hyperbolic by Lemma \ref{lem:infimum}.

\end{proof}

\begin{lem}
\label{lem:main}
The following expressions hold near the origin.

(1) If $\mathfrak{L}$ is parabolic, then

         $$ \mathrm{d} \sigma^2|_{\Delta_{\varepsilon}}=\vert \xi \vert ^{-2}\big(\ln|\xi|\big)^{-2}|d\xi|^{2},$$
where\ $\Delta_{\varepsilon}=\{{w\in\mathbb{C}|\vert w \vert<\varepsilon}\}$\ for some\ $\varepsilon>0$. Moreover,\ $\xi$ is unique up to replacement by\ $\lambda \xi$ where\ $\vert \lambda \vert=1$.

(2) \ If $\mathfrak{L}$ is elliptic, then

       $$ \mathrm{d} \sigma^2|_{\Delta_{\varepsilon}}=\frac{4(k+\alpha)^{2}\vert \xi \vert^{2k+2\alpha-2}}{(1-\vert \xi \vert ^{2k+2\alpha})^{2}}{\vert \mathrm{d} \xi \vert}^{2}.$$
where $0<\alpha<1$, $k$ is a nonnegative integer and\ $\Delta_{\varepsilon}=\{{w\in\mathbb{C}|\vert w \vert<\varepsilon}\}$\ for some\ $\varepsilon>0$. Moreover,\ $\xi$ is unique up to replacement by\ $\lambda \xi$ where\ $\vert \lambda \vert=1$.

(3) \ If $\mathfrak{L}$ is the identity, then
       $$ \mathrm{d} \sigma^2|_{\Delta_{\varepsilon}}=\frac{4k^{2}\vert \xi \vert^{2k-2}}{(1-\vert \xi \vert ^{2k})^{2}}{\vert \mathrm{d} \xi \vert}^{2}.$$
where $k$ is a positive integer and\ $\Delta_{\varepsilon}=\{{w\in\mathbb{C}|\vert w \vert<\varepsilon}\}$\ for some\ $\varepsilon>0$. Moreover,\ $\xi$ is unique up to replacement by\ $\lambda \xi$ where\ $\vert \lambda \vert=1$.\\

\end{lem}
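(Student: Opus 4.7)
My plan is to handle all three cases by the same recipe: from the lifted developing map $f$, build an auxiliary holomorphic function on $\mathbb{D}^{*}$ that extends across the puncture, and then read off the new coordinate $\xi$ from its Taylor expansion at $0$. In case (1), I first use Remark~\ref{rem:devloping map} to pass to the upper half-plane target model and conjugate so that $\mathfrak{L}(z)=z+1$, giving $f(z+2\pi)=f(z)+1$. Then $\Phi(w):=e^{2\pi i f(z)}$ with $w=e^{iz}$ is well-defined and holomorphic on $\mathbb{D}^{*}$, and $|\Phi|=e^{-2\pi\,\textup{Im}\,f}<1$, so $\Phi$ extends holomorphically to $\mathbb{D}$ by Riemann's theorem. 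The crucial step is proving $\Phi(0)=0$: if instead $\Phi$ were nonvanishing on a neighborhood of $0$, then $\log\Phi$ would be single-valued there, $\log\Phi\circ e^{iz}$ would be $2\pi$-periodic in $z$, and it would differ from $2\pi i f(z)$ by a constant integer multiple of $2\pi i$ (the difference is continuous and integer-valued, hence constant), forcing $f(z+2\pi)=f(z)$ and contradicting the monodromy. Given $\Phi(0)=0$, I would expand $\Phi(w)=aw^{k}(1+O(w))$ and define $\xi$ to be a local single-valued $k$-th root of $\Phi$; a direct change-of-variable computation shows that $d\sigma^{2}$ in the $w$-coordinate coincides with $\Phi^{*}$ of the standard cusp metric on $\mathbb{D}^{*}$, and the identity $(\xi^{k})^{*}(\text{cusp})=\text{cusp}$ then yields the formula in $\xi$.

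In case (2) I stay in the disc model, conjugate so that $\mathfrak{L}(w)=e^{2\pi i\alpha}w$ with $\alpha\in(0,1)$, and set $G(w):=e^{-i\alpha z}f(z)$; this is $2\pi$-periodic in $z$, descends to $G\in\mathcal{O}(\mathbb{D}^{*})$, and the bound $|f|<1$ gives $|wG(w)|\le|w|^{1-\alpha}\to 0$ as $w\to 0$, so $G$ extends holomorphically to $0$ by Riemann. Writing $G(w)=b_{k}w^{k}(1+O(w))$ with $b_{k}\ne 0$, one gets $f=w^{k+\alpha}(b_{k}+O(w))$; extracting a single-valued $(k+\alpha)$-th root of the nonvanishing factor $b_{k}+O(w)$ produces a coordinate $\xi$ with $f=\xi^{k+\alpha}$, and pulling back the disc metric via this identity delivers the conical form with exponent $k+\alpha$. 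Case (3) is the $\alpha=0$ analogue: $f$ descends directly to $F\in\mathcal{O}(\mathbb{D}^{*})$, extends to $\mathbb{D}$ by boundedness, the maximum principle forces $F(\mathbb{D})\subset\mathbb{D}$, and post-composing $F$ with a Möbius transformation (without disturbing the trivial monodromy) sends $F(0)$ to $0$; the same root-extraction then yields $\xi$ and the stated formula.

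For uniqueness, two coordinates $\xi,\tilde\xi$ realizing the prescribed form differ by a germ $\phi$ of biholomorphism at $0$ fixing $0$ and preserving the explicit metric. In case (1), lifting $\phi$ through $\xi=e^{iz}$ yields an isometry of $\mathbb{H}$ commuting with $z\mapsto z+2\pi$; by Lemma~\ref{lem:classification} this must be a horizontal translation, so $\tilde\xi=e^{ib}\xi$. In cases (2) and (3), the developing map in either coordinate is $\xi^{k+\alpha}$ (respectively $\xi^{k}$), and two developing maps of the same metric differ by post-composition with some $\mathfrak{K}\in\textup{PSU}(1,1)$; the condition $\phi(0)=0$ forces $\mathfrak{K}$ to fix $0\in\mathbb{D}$ and hence to be a rotation, and extracting roots yields $\tilde\xi=\lambda\xi$ with $|\lambda|=1$. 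The main obstacle is the $\Phi(0)=0$ step in case (1): the elliptic case earns Riemann-extendability for free from the quantitative growth $|G|=O(|w|^{-\alpha})$ with $\alpha<1$, but no analogous bound is available on $\Phi$, so one has to extract the vanishing at $0$ directly from the monodromy relation via the single-valuedness of $\log\Phi$.
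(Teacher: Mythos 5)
Your proposal is correct, and for cases (2), (3) and the uniqueness statements it follows essentially the paper's own route: the periodic function $e^{-i\alpha z}f$ descends to $G$ on $\mathbb{D}^{*}$, the bound $|G|<|w|^{-\alpha}$ (resp.\ $|F|<1$) gives Riemann extendability, and root extraction produces $\xi$; uniqueness comes from the $\textup{PSU}(1,1)$- (resp.\ $\textup{PSL}(2,\mathbb{R})$-) ambiguity of developing maps. Where you genuinely diverge is case (1). The paper subtracts $z$ to get a periodic function $g=\widetilde{f}\mp z$, descends it to $G$, and then must exclude a pole or essential singularity of $G$ at $0$ using the Great Picard and Casorati--Weierstrass theorems, and in addition runs a separate singularity analysis to rule out negative translation length $t<0$. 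You instead exponentiate: $\Phi=e^{2\pi i f}$ is single-valued and bounded by $1$, so Riemann's theorem applies for free; the only real work is your monodromy argument that $\Phi(0)=0$ (a nonvanishing $\Phi$ near $0$ would give a single-valued $\log\Phi$ and force $f$ to be periodic), after which the invariance of the cusp metric under $\xi\mapsto\xi^{k}$ finishes the computation. This buys two things: it avoids Picard-type theorems entirely, and it is insensitive to the sign of the translation, so the paper's whole exclusion of $t<0$ becomes unnecessary. That matters because your normalization claim is, strictly speaking, off: $z\mapsto z+1$ and $z\mapsto z-1$ are distinct conjugacy classes in $\textup{PSL}(2,\mathbb{R})$, so you can only normalize $\mathfrak{L}$ to $z\mapsto z\pm 1$; fortunately your argument never uses the sign, so this is harmless. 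Two smaller points to tighten: in the case (1) uniqueness, ``commutes with $z\mapsto z+2\pi$, hence is a horizontal translation'' is not literally Lemma 2.2 --- you need the easy centralizer fact that such an isometry must fix the parabolic's fixed point $\infty$, hence is affine $z\mapsto az+b$, and commutation forces $a=1$; and, incidentally, the monodromy relation forces $k=1$ in your case (1), although your formula is correct for every $k$ by the power-invariance of the cusp metric.
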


\begin{proof}
(1) Lemma \ref{lem:classification} and Lemma \ref{lem:devloping map} imply that there exists a locally schlicht function $f:\mathbb{H}\rightarrow \mathbb{H}$ such that $(e^{iz})^{*}\mathrm{d} \sigma^2=f^{*}\mathrm{d} \sigma^2_{\mathbb{H}}$ and $f(z+2\pi)=f(z)+t$ for all $z\in \mathbb{H}$, where  $t\neq0$ is a real number.

(i) If $t<0$, then
$$\widetilde{f}=\begin {pmatrix}
\sqrt{\frac{2\pi}{-t}} & 0 \\ 0 & \sqrt{\frac{-t}{2\pi}} \end {pmatrix}\circ f=-\frac{2\pi}{t}f$$
is also a developing map by Lemma \ref{lem:devloping map}. We have $\widetilde{f}(z+2\pi)=\widetilde{f}(z)-2\pi$. Let $g(z)=\widetilde{f}(z)+z$, then $g(z+2\pi)=g(z)$. And $g(z)$ is a simply periodic function with period $2\pi$. Let $w=e^{iz}$, then there exists a unique holomorphic function $G$ in $\mathbb{D}^{*}=\{w|0<|w|<1\}$ such that $g(z)=G(w)$. Thus by \rm{\cite[p. 264]{A79}} we have the complex Fourier development
\[
g(z)=\sum_{n=-\infty}^{\infty} a_{n}e^{niz}.
\]
So $\widetilde{f}(z)=\sum_{n=-\infty}^{\infty} a_{n}e^{niz}-z$, and $\textup{Im} \widetilde{f}>0$, i.e.
\begin{equation}
\label{equ:(1)}
\textup{Im}\sum_{n=-\infty}^{\infty} a_{n}w^n-\ln\frac{1}{|w|}>0 \ \ \ for\ 0<|w|<1.
\end{equation}
We know that $G(w)=\sum_{n=-\infty}^{\infty} a_{n}w^n$ is a holomorphic function in $\mathbb{D}^{*}$.

If $0$ is a removable singularity of $G(w)$, it contradicts the above inequality as $w$ tends to $0$.

Suppose that $0$ is a pole of order $m$ for $G(w)$. Then by (\ref{equ:(1)}) we have $\textup{Im} \sum_{n=-\infty}^{\infty} a_{n}w^n=\textup{Im}\ w^{-m}h(w)>0$ for $0<|w|<1$, where $h(w)$ is a holomorphic function in $\mathbb{D}$. Contradiction!

Suppose that $0$ is an essential singularity of $G(w)$. Then it contradicts (\ref{equ:(1)}) by the Great Picard Theorem.

Therefore, we have excluded case $t<0$.

(ii) If $t>0$, then $$\widetilde{f}=\begin {pmatrix}
\sqrt{\frac{2\pi}{t}} & 0 \\ 0 & \sqrt{\frac{t}{2\pi}} \end {pmatrix}\circ f=\frac{2\pi}{t}f$$
is also a developing map by Lemma \ref{lem:devloping map}. We have $\widetilde{f}(z+2\pi)=\widetilde{f}(z)+2\pi$.
Let $g(z)=\widetilde{f}(z)-z$, then $g(z+2\pi)=g(z)$. And $g(z)$ is a simply periodic function with period $2\pi$. Let $w=e^{iz}$, then there exists a unique holomorphic function $G$ in $\mathbb{D}^{*}=\{w|0<|w|<1\}$ such that $g(z)=G(w)$. Thus we have the complex Fourier development
\[
g(z)=\sum_{n=-\infty}^{\infty} a_{n}e^{niz}.
\]
So $\widetilde{f}(z)=\sum_{n=-\infty}^{\infty}a_{n}e^{niz}+z$, and $\textup{Im} \widetilde{f}>0$, i.e.
\begin{equation}
\label{equ:(2)}
\textup{Im}\sum_{n=-\infty}^{\infty} a_{n}w^n+\ln\frac{1}{|w|}>0 \ \ \ for\ 0<|w|<1.
\end{equation}
We know that $G(w)=\sum_{n=-\infty}^{\infty} a_{n}w^n$ is a holomorphic function on $\mathbb{D}^{*}$. By (\ref{equ:(2)}) we have
\[
e^{\textup{Im} G(w)}=\bigg{|}e^{-\sqrt{-1} G(w)}\bigg{|}>|w| \ \ \ for\ 0<|w|<1.
\]
So
$\bigg{|}\frac{e^{-\sqrt{-1} G(w)}}{w}\bigg{|}>1$ and $\frac{e^{-\sqrt{-1} G(w)}}{w}$ is a holomorphic function in $\mathbb{D}^{*}$. By the Great Picard Theorem, $0$ is not an essential singularity of $\frac{e^{-\sqrt{-1} G(w)}}{w}$.

If $G(w)$ has an essential singularity at $0$, consider any non-zero $c\in\mathbb{C}$. By the Casorati-Weierstrass theorem \rm{\cite[p.86]{S2003}}, there is a sequence $z_{n}\rightarrow0$ such that $G(z_{n})\rightarrow \sqrt{-1}\log c$. So $\exp(-\sqrt{-1}G(z_{n}))\rightarrow c$. Since this is true for all non-zero $c$, $\exp(-\sqrt{-1}G(w))$ must have an essential singularity at $0$, then $\frac{e^{-\sqrt{-1} G(w)}}{w}$ does too. Contradiction!

Suppose that $0$ is a pole of order $m$ for $G(w)$. Then
\[
-\sqrt{-1}G(w)=\frac{h(w)}{w^{m}},
\]
where $h(w)$ is holomorhic on $\mathbb{D}$ and does not vanish near the origin. Let $h(0)=re^{\sqrt{-1}\theta}$, $r>0$. Consider the sequence
\[
z_{k}=\frac{\exp(\sqrt{-1}\theta/m)}{k},
\]
then $-\sqrt{-1}G(z_{k})=h(z_{k})\exp(-\sqrt{-1}\theta)k^{m}$. Since $h(z_{k})\rightarrow re^{\sqrt{-1}\theta}$, $\exp(-\sqrt{-1}G(z_{k}))$ converges to $+\infty$. If we consider
\[
w_{k}=\frac{\exp(\sqrt{-1}(\pi+\theta)/m)}{k},
\]
then we have $\exp(-\sqrt{-1}G(w_{k}))$ converges to $0$. So $\exp(-\sqrt{-1}G(w))$ have an essential singularity at $0$, contradiction!

Hence $G(w)$ extends to $w=0$ holomorphically.

 So $\widetilde{f}(z)=\sum_{n=k}^{\infty} a_{n}e^{niz}+z$, where $k\geq0$. And $\widetilde{f}(w)=-\sqrt{-1}\log w+\sum_{n=k}^{\infty} a_{n}w^{n}$ can be viewed as a developing map from $\mathbb{D}^{*}$ to $\mathbb{H}$. So we can choose another complex coordinate $\xi$ near $0$ with\ $\xi(0)=0$ such that
$$\xi=w\cdot\exp\bigg(\sqrt{-1}\sum_{n=k}^{\infty} a_{n}w^{n}\bigg),$$
then
$$\mathrm{d} \sigma^2|_{\Delta_{\varepsilon}}=\widetilde{f}^{*}\mathrm{d} \sigma^2_{\mathbb{H}}=\vert \xi \vert ^{-2}\big(\ln|\xi|\big)^{-2}|d\xi|^{2},$$
where\ $\Delta_{\varepsilon}=\{{w\in\mathbb{C}|\vert w \vert<\varepsilon}\}$\ for some\ $\varepsilon>0$.

Here we show the uniqueness of the complex coordinate $\xi$. Let\ $\xi$ and\ $\widetilde{\xi}$\ be coordinates such that conditions of the lemma are satisfied, then\ $F(\xi)=-\sqrt{-1}\log \xi$,\ $\widetilde{F}(\widetilde{\xi})=-\sqrt{-1}\log \widetilde{\xi}$ are all developing maps of\ $\mathrm{d} \sigma^2$. By Remark \ref{rem:devloping map}, there exists\ $\mathfrak{L}\in \textup{PSL}(2,\mathbb{R})$ such that\ $\widetilde{F}=\mathfrak{L}\circ F$, then\ $\widetilde{F}=\frac{aF+b}{cF+d}\ ,\ ad-bc=1$. Since\ $\xi(0)=\widetilde{\xi}(0)=0$,\ $F(0)=\widetilde{F}(0)=\infty$, we have\ $c=0$  by a calculation. Thus\ $\widetilde{F}=\frac{aF+b}{d}=a^{2}F+ab$, then\ $-\sqrt{-1}\log\widetilde{\xi}=-a^{2}\sqrt{-1}\log \xi+ab$, $\widetilde{\xi}=\xi^{a^{2}}\cdot e^{\sqrt{-1}ab}$. So there exists an open disk\ $V$\ which is near $0$ and does not contain\ $0$\ such that\ $a^{2}=1$,\ $\log\widetilde{\xi}=\log \xi+ab\sqrt{-1}$. Therefore we have\ $\widetilde{\xi}=\lambda \xi$ on\ $V$ with\ $|\lambda|=1$. Since\ $\xi$,\ $\widetilde{\xi}$ and\ $w$ are coordinates near\ $0$,\ $z$ and\ $\widetilde{\xi}$\ are holomorphic functions of\ $w$, then\ $\widetilde{\xi}=\lambda \xi$,\ $|\lambda|=1$\ holds in a neighborhood of\ $0$.\\

(2) As in case (1) there exists a holomorphic function $f:\mathbb{H}\rightarrow\mathbb{D}$ such that $(e^{iz})^{*}\mathrm{d} \sigma^2=f^{*}\mathrm{d} \sigma^2_{\mathbb{D}}$ and that $f(z+2\pi)=e^{2\pi\alpha i}f(z),\ 0<\alpha<1$. Let $g(z)=f\cdot\exp {(-i\alpha z)}$, then $g(z+2\pi)=g(z)$. And $g(z)$ is a simply periodic function with period $2\pi$. Let $w=e^{iz}$, then there exists a unique holomorphic function $G$ in $\mathbb{D}^{*}=\{w|0<|w|<1\}$ such that $g(z)=G(w)$. Thus we have the complex Fourier development
\[
g(z)=\sum_{n=-\infty}^{\infty} a_{n}e^{niz}.
\]
We know that $G(w)=\sum_{n=-\infty}^{\infty} a_{n}w^n$ is a holomorphic function in $\mathbb{D}^{*}$. We have $|G(w)|\cdot|w|^{\alpha}<1$ by the range of $f$, then we have $w=0$ is a removable singularity of $G(w)$ by $|G(w)|<|w|^{-\alpha}$ and $0<\alpha<1$. So
\[
f(w)=w^{\alpha}\sum\limits_{n=k}^\infty a_{n}w^{n},
\]
where $k(\geq0)$ is an integer and $a_{k}\neq0$. And $f(w)$ can be viewed as a developing map from $\mathbb{D}^{*}$ to $\mathbb{D}$. So we can choose another complex coordinate $\xi$ near $0$ with\ $\xi(0)=0$ such that $\xi^{\alpha+k}=w^{\alpha}\sum\limits_{n=k}^\infty a_{n}w^{n}$,
then
$$ \mathrm{d} \sigma^2|_{\Delta_{\varepsilon}}=\frac{4(k+\alpha)^{2}\vert \xi \vert^{2k+2\alpha-2}}{(1-\vert \xi \vert ^{2k+2\alpha})^{2}}{\vert \mathrm{d} \xi \vert}^{2},$$
where\ $\Delta_{\varepsilon}=\{{w\in\mathbb{C}|\vert w \vert<\varepsilon}\}$\ for some\ $\varepsilon>0$.

We show the uniqueness of the complex coordintae $\xi$. Let\ $\xi$ and\ $\widetilde{\xi}$\ be coordinates such that conditions of the lemma are satisfied, then\ $F(\xi)=\xi^{\alpha}$,\ $\widetilde{F}(\widetilde{\xi})=\widetilde{\xi}^{\alpha}$\ are all developing maps of\ $ \mathrm{d} \sigma^2$. By Lemma \ref{lem:devloping map}, there exists\ $\mathfrak{L}\in \textup{PSU(1,1)}$ such that\ $\widetilde{F}=\mathfrak{L}\circ F$, then\ $\widetilde{F}=\frac{aF+b}{\overline{b}F+\overline{a}}\ ,\ \ |a|^{2}-|b|^{2}=1$. Since\ $\xi(0)=\widetilde{\xi}(0)=0$, $F(0)=\widetilde{F}(0)=0$, we have\ $b=0$  by a calculation. Thus\ $\widetilde{F}=\frac{a}{\overline{a}}F=\mu F ,\ |\mu|=1$, then there exists an open disk\ $V$\ which is near $0$ and does not contain\ $0$ such that\ $\widetilde{\xi}^{\alpha}=\mu \xi^{\alpha}$. Therefore we have\ $\widetilde{\xi}=\lambda \xi$ on\ $V$ with\ $|\lambda|=1$. Since\ $\xi$,\ $\widetilde{\xi}$ and\ $w$ are coordinates near\ $0$,\ $\xi$ and\ $\widetilde{\xi}$\ are holomorphic functions of\ $w$, then\ $\widetilde{\xi}=\lambda \xi$,\ $|\lambda|=1$\ holds in a neighborhood of\ $0$.\\

(3) Since $\mathfrak{L}$ is the identity, $f(z+2\pi)=f(z)$, then $f(z)$ is a simply periodic function with period $2\pi$. Let $w=e^{iz}$, then there exists a unique holomorphic function $F$ in $\mathbb{D}^{*}=\{w|0<|w|<1\}$ such that $f(z)=F(w)$. Thus we have the complex Fourier development
\[
f(z)=\sum_{n=-\infty}^{\infty} a_{n}e^{niz}.
\]
We know that $F(w)=\sum_{n=-\infty}^{\infty} a_{n}w^n$ is a holomorphic function in $\mathbb{D}^{*}$ and $|F|=|f|<1$, so $w=0$ is a removable singularity and $F(w)$ extends to $w=0$ holomorphically. Let
\[
F(w)=\sum\limits_{n=k}^\infty a_{n}w^{n},
\]
where $k(\geq0)$ is an integer and $a_{k}\neq0$. And $F(w)$ can be viewed as a developing map from $\mathbb{D}^{*}$ to $\mathbb{D}$. Since $\frac{aF+b}{\overline{b}F+\overline{a}}$ is also a developing, where $a, b\in\mathbb{C}\ \text{and}\ \vert a \vert^2-\vert b \vert^2=1$, so we can set $F(0)=0$ without loss of generality. Then we can choose another complex coordinate $\xi$ near $0$ with\ $\xi(0)=0$ such that $\xi^{k}=\sum\limits_{n=k}^\infty a_{n}w^{n}$,
then
$$ \mathrm{d} \sigma^2|_{\Delta_{\varepsilon}}=\frac{4k^{2}\vert \xi \vert^{2k-2}}{(1-\vert \xi \vert ^{2k})^{2}}{\vert \mathrm{d} \xi \vert}^{2},$$
where $k$ is a positive integer, $\Delta_{\varepsilon}=\{{w\in\mathbb{C}|\vert w \vert<\varepsilon}\}$\ for some\ $\varepsilon>0$.

The uniqueness of the coordinate $\xi$ is similar to the above.

\end{proof}

So we have obtained the Theorem \ref{thm:main}. Note that in the proof of the above lemma, we actually obtain the local expressions of $\mathrm{d} \sigma^2$ near the origin by choosing a special developing map under a suitable complex coordinate. From the proof of the above lemma and
Lemma \ref{lem:devloping map}, we can get the Theorem \ref{thm:local model of developing map}.





\begin{thebibliography}{99}

\footnotesize

\bibitem {N57} J. Nitsche. {\"{U}}ber die isolierten Singularit{\"{a}}ten der L{\"{o}}sungen von\ $\Delta u=e^{u}$. \emph{Math. Z. } {\bf 68} (1957), 316-324.


\bibitem {HE62} M. Heins. On a class of conformal metrics. \emph{Nagoya Math. J.} {\bf 21} (1962), 1-60.


\bibitem {CW94} K. S. Chou and T. Wan. Asymptotic radial symmetry for solutions of $\Delta u+e^{u}=0$ in a punctured disc. \emph{Pacific Journal of Mathematics.} {\bf 163} (1994), 269-276.



\bibitem {CW95} K. S. Chou and T. Wan. Correction to ``Asymptotic radial symmetry for solutions of $\Delta u+e^{u}=0$ in a punctured disc". \emph{Pacific Journal of Mathematics.} {\bf 171} (1995), 589-590.

\bibitem{Ya88} A.Yamada, \emph{Bounded analytic functions and metrics of constant curvature of Riemann surfaces}, Kodai Math. J. {\bf 11} (1988) 317-324.

\bibitem {LLX17} B. Li, Y. Feng, L. Li and B. Xu. Bounded projective functions and hyperbolic metrics with isolated singularities. \emph{arXiv 1709.03112v2} [math.DG] (2017).

\bibitem{FSX}  Y. Feng, Y-Q. Shi and B. Xu. Isolated singularities of conformal
hyperbolic metrics (in Chinese). \emph{ Chinese Annals of Mathematics.} {\bf 40}A (2019), 15-26. \emph{ Chinese Journal of Contemporary Mathematics.} {\bf 40} (2019), 15-26.

\bibitem{R06} J. G. Ratcliffe. Foundations of hyperbolic manifolds (second edition), New York: Springer Science+Business Media, LLC, 2006.

\bibitem{A05} J. W. Anderson. Hyperbolic Geometry (second edition), London: Springer-Verlag, 2005.

\bibitem{B83} A. F. Beardon. The geometry of discrete groups, New York: Springer-Verlag, Inc, 1983.

\bibitem{A79} L. V. Ahlfors. Complex Analysis (third edition), New York: McGraw-Hill, 1979.

\bibitem{S2003} E. M. Stein and R. Shakarchi. Complex Analysis, Princeton University Press, 2003.
\end{thebibliography}
\end{document}